\newcommand{\red}{{\mathrm{red}}}
\newcommand{\unip}{{\mathrm{unip}}}
\DeclareMathOperator{\Aut}{Aut}
\DeclareMathOperator{\Ind}{Ind}
\DeclareMathOperator{\Res}{Res}
\DeclareMathOperator{\Lie}{Lie}
\DeclareMathOperator{\Ext}{Ext}
\newcommand{\idiag}{\overset{\text{\ref{it:diag}}}{\hookrightarrow}}
\newcommand{\ilevi}{\overset{\text{\ref{it:levi}}}{\hookrightarrow}}
\newcommand{\iauto}{\overset{\text{\ref{it:auto}}}{\hookrightarrow}}
\newcommand{\iclass}{\overset{\text{\ref{it:class}}}{\hookrightarrow}}
\newcommand{\imax}{\overset{\text{\ref{it:max}}}{\hookrightarrow}}
\newcommand{\ires}{\overset{\text{\ref{it:resirr}}}{\hookrightarrow}}
\newcommand{\itens}{\overset{\text{\ref{it:tens}}}{\hookrightarrow}}
\newcommand{\bk}{\Bbbk}
\newcommand{\rA}{\mathrm{A}}
\newcommand{\rB}{\mathrm{B}}
\newcommand{\rC}{\mathrm{C}}
\newcommand{\rD}{\mathrm{D}}
\newcommand{\rE}{\mathrm{E}}
\newcommand{\rF}{\mathrm{F}}
\newcommand{\rG}{\mathrm{G}}
\newcommand{\rT}{\mathrm{T}}
\newcommand{\cN}{\mathcal{N}}
\newcommand{\bX}{\mathbf{X}}
\newcommand{\SL}{\mathrm{SL}}
\newcommand{\PSL}{\mathrm{PSL}}
\newcommand{\GL}{\mathrm{GL}}
\newcommand{\SO}{\mathrm{SO}}
\newcommand{\Sp}{\mathrm{Sp}}
\newcommand{\Spin}{\mathrm{Spin}}
\newcommand{\Gm}{\mathbb{G}_m}
\numberwithin{equation}{section}
\newtheorem{thm}{Theorem}[section]
\newtheorem{lem}[thm]{Lemma}
\newtheorem{prop}[thm]{Proposition}
\newtheorem{cor}[thm]{Corollary}
\theoremstyle{definition}
\theoremstyle{remark}
\newtheorem{rmk}[thm]{Remark}
\title{Nilpotent centralizers and good filtrations}
 \author{Pramod N. Achar}
 \address{Department of Mathematics\\
   Louisiana State University\\
   Baton Rouge, LA 70803\\
   U.S.A.}
 \email{pramod@math.lsu.edu}
 \author{William Hardesty}
 \address{School of Mathematics and Statistics\\
   University of Sydney\\
   Camperdown, NSW 2006\\
   U.S.A.}
 \email{hardes11@gmail.com}
 \thanks{P.A. was supported by NSF Grant No.~DMS-1802241. W.H. was supported by the ARC Discovery Grant No.~DP170104318.}
\begin{document}

\begin{abstract}
Let $G$ be a connected reductive group over an algebraically closed field $\bk$.  Under mild restrictions on the characteristic of $\bk$, we show that any $G$-module with a good filtration also has a good filtration as a module for the reductive part of the centralizer of a nilpotent element $x$ in its Lie algebra.  
\end{abstract}

\maketitle

\section{Introduction}

Let $G$ be a connected reductive group over an algebraically closed field $\bk$ of characteristic $p > 0$, and let $H$ be a connected reductive subgroup.  Recall that $(G,H)$ is said to be a \emph{Donkin pair} or a \emph{good filtration pair} if every $G$-module with a good filtration still has a good filtration when regarded as an $H$-module.  

Now let $x$ be a nilpotent element in the Lie algebra of $G$, and let $G^x \subset G$ be its stabilizer.  If $p$ is good for $G$, then the theory of \emph{associated cocharacters} is available, and this gives rise to a decomposition
\[
G^x = G^x_\red \ltimes G^x_\unip
\]
where $G^x_\unip$ is a connected unipotent group, and $G^x_\red$ is a (possibly disconnected) group whose identity component $(G^x_\red)^\circ$ is reductive (cf. \cite[5.10]{jan:norp}).  The main result of this paper is the following.

\begin{thm}\label{thm:main}
Let $G$ be a connected reductive group over an algebraically closed field $\bk$ of good characteristic.  For any nilpotent element $x$ in its Lie algebra, $(G, (G^x_\red)^\circ)$ is a Donkin pair.
\end{thm}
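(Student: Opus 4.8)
The strategy is to combine a \emph{transitivity} principle with a short catalogue of elementary Donkin pairs. Transitivity is immediate from the definition: if $(G,H)$ and $(H,K)$ are Donkin pairs, then so is $(G,K)$, since restriction is transitive and preserves good filtrations at each stage. It therefore suffices to exhibit the inclusion $(G^x_\red)^\circ \hookrightarrow G$ as a composite of inclusions each already known to be a Donkin pair --- the Levi, classical, maximal-rank, tensor-product, and diagonal inclusions recorded earlier (items \ref{it:levi}, \ref{it:class}, \ref{it:max}, \ref{it:tens}, \ref{it:diag}, and, where a classical form is cut out as a fixed-point subgroup, item \ref{it:auto}). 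Throughout I fix a cocharacter $\lambda$ associated to $x$, giving $\mathfrak{g}=\bigoplus_i \mathfrak{g}_i$ with $x\in\mathfrak{g}_2$, and I use the description $G^x_\red = G^x \cap C_G(\lambda)$ from \cite[5.10]{jan:norp}.

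\emph{First reductions.} If $G=G_1\times G_2$ and $x=(x_1,x_2)$, then $(G^x_\red)^\circ=\prod_i\bigl((G_i)^{x_i}_{\red}\bigr)^\circ$, and since costandard modules for a direct product are outer tensor products of costandard modules, the good-filtration property passes to the factors; this reduces the theorem to $G$ (almost) simple. Central isogenies and central tori are harmless for the good-filtration property, so I may assume $G$ simple and simply connected. Next I peel off the central directions of the centralizer: set $S=Z\bigl((G^x_\red)^\circ\bigr)^\circ$ and $L_0=C_G(S)$, a Levi subgroup of $G$ since $S$ is a torus. As $(G^x_\red)^\circ$ centralizes its own connected center, $(G^x_\red)^\circ\subseteq L_0$, and a standard computation with associated cocharacters identifies $(G^x_\red)^\circ$ with the reductive part of the centralizer of $x$ in $L_0$. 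By item \ref{it:levi} the pair $(G,L_0)$ is Donkin, so by transitivity I am reduced to the case $S\subseteq Z(G)^\circ$, i.e.\ to nilpotent elements whose reductive centralizer has $G$-central connected center.

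\emph{Heart of the argument.} In this reduced situation I describe $(G^x_\red)^\circ\hookrightarrow G$ explicitly. In the classical types the $\lambda$-grading corresponds to a decomposition of the natural module $V=\bigoplus_i M_i\otimes N_i$, where $N_i$ is the $i$-dimensional indecomposable $\mathfrak{sl}_2$-module (carrying the bilinear form dictated by $i$ and by the form on $V$) and $M_i$ is a multiplicity space; the group $(G^x_\red)^\circ$ is $\prod_i \mathrm{Cl}(M_i)$ acting on the $M_i$-factors. The inclusion thus factors as a product of tensor inclusions $\mathrm{Cl}(M_i)\hookrightarrow \mathrm{Cl}(M_i\otimes N_i)$ followed by the maximal-rank (block-diagonal) inclusion $\prod_i \mathrm{Cl}(M_i\otimes N_i)\hookrightarrow \mathrm{Cl}(V)=G$, which are Donkin pairs by items \ref{it:tens}, \ref{it:class}, \ref{it:max}, assembled via \ref{it:diag}. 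For the exceptional types I would run through the Bala--Carter classification of nilpotent orbits together with the tabulated descriptions of $(G^x_\red)^\circ$ and its embedding, checking case by case that each is built from the listed elementary inclusions.

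\emph{Main obstacle.} The genuine difficulty lies in the heart of the argument rather than in the formal reductions: establishing that the individual tensor and classical-group inclusions (item \ref{it:tens} above all) really are Donkin pairs in \emph{every} good characteristic --- which rests on Mathieu's tensor-product theorem together with Donkin's good-filtration results for the classical groups --- and, in the exceptional types, verifying uniformly that every embedding $(G^x_\red)^\circ\hookrightarrow G$ surviving the reduction is a composite of these pieces. The small good characteristics, where component groups of centralizers and the precise shape of the associated cocharacter are most delicate, are where I expect the verification to demand the most care.
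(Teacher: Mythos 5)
Your skeleton---reduce to the quasi-simple, simply connected case, then factor each embedding $(G^x_\red)^\circ \hookrightarrow G$ through a catalogue of elementary Donkin pairs, glued by transitivity---is exactly the paper's strategy, and your treatment of the classical groups (multiplicity spaces $M_i$, tensor inclusions, then block-diagonal inclusions) is essentially the paper's proof. Two small corrections there: the block-diagonal inclusion $\prod_i \mathrm{Cl}(M_i\otimes N_i)\hookrightarrow \mathrm{Cl}(V)$ is item~\ref{it:class} (or item~\ref{it:levi} for $\GL$), not item~\ref{it:max}, which in the paper concerns maximal-rank subgroups of exceptional groups only; and for type $\rA$ the inclusion $\GL(M_i)\hookrightarrow\GL(M_i\otimes N_i)$ is not an instance of item~\ref{it:tens} (which is stated only for groups preserving forms, with $p>2$), but instead factors as a diagonal embedding followed by a Levi, i.e.\ items~\ref{it:diag} and~\ref{it:levi}. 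Your preliminary reduction via $S=Z\bigl((G^x_\red)^\circ\bigr)^\circ$ and $L_0=C_G(S)$ is a sound idea, but the assertion that $(G^x_\red)^\circ = \bigl((L_0)^x_\red\bigr)^\circ$ is not a ``standard computation'' to be waved at: it is precisely Lemma~\ref{lem:levi-cent} of the paper, whose proof needs the semidirect product decomposition of $(G^x)^\circ$ and the connectedness of torus centralizers. (The paper uses that lemma in the opposite direction, to descend the $\rE_8$ result to $\rE_7$ and $\rE_6$.)

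The genuine gap is the exceptional groups. ``Run through the Bala--Carter classification\dots checking case by case'' is a restatement of the problem, not a solution: this verification is the bulk of the paper (Tables~\ref{tab:e8-1}--\ref{tab:g2}), and it is not routine. For some $\rE_8$ orbits ($\rA_6$, $\rA_6\rA_1$) the Liebeck--Seitz tables do not pin down the embedding, and one needs Seitz's memoir to identify the relevant $\rA_1$ as a Levi subgroup; similarly for $\rF_4$ (e.g.\ the orbit $\tilde\rA_1$), where the embedding must be reconstructed by locating the centralizer of a $\rG_2$ inside $\rE_8$. Moreover, if one follows the paper's slicker route for $\rE_7$ and $\rE_6$---realizing $G_0$ as the derived group of a Levi $L\subset\rE_8$ and invoking the $\rE_8$ case---one hits a subtlety your transitivity principle cannot resolve: knowing $(\rE_8,K)$ is a Donkin pair for $K\subseteq G_0\subseteq \rE_8$ does not formally imply $(G_0,K)$ is one, since transitivity runs the wrong way; the paper needs an extra argument that every fundamental tilting module of $G_0$ is a direct summand of a restricted $\rE_8$-tilting module, via the surjection of weight lattices. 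So your proposal correctly identifies the architecture and handles the classical types, but the load-bearing case analysis for the exceptional types is left undone, and it is exactly there that the paper's real work (and its reliance on delicate structural input from Liebeck--Seitz and Seitz) lies.
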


Now suppose that $H \subset G$ is a possibly \emph{disconnected} reductive subgroup, i.e., a group whose identity component $H^\circ$ is reductive. If the characteristic of $\bk$ does not divide the order of the finite group $H/H^\circ$, then the category of finite-dimensional $H$-modules is a highest-weight category, as shown in~\cite{ahr}.  In particular, it makes sense to speak of good filtrations for $H$-modules, and so the definition of ``Donkin pair'' makes sense for $(G,H)$.

In order to apply this notion in the case where $H = G^x_\red$, we must impose a slightly stronger condition on $p$: we require it to be \emph{pretty good} in the sense of~\cite[Definition~2.11]{herpel}.  (In general, this condition is intermediate between ``good'' and ``very good.'' It coincides with ``very good'' for semisimple simply-connected groups, whereas for $\GL_n$, all primes are pretty good.)  This is equivalent to requiring $G$ to be \emph{standard} in the sense of~\cite[\S 4]{mcninch-testerman}.  It follows from~\cite[Theorem~1.8]{hardesty} and~\cite[Lemma~2.1]{ahjr} that when $p$ is pretty good for $G$, it does not divide the order of $G^x/(G^x)^\circ \cong G^x_\red/(G^x_\red)^\circ$ for any nilpotent element $x$. As an immediate consequence of Theorem~\ref{thm:main} and Lemma~\ref{lem:donkin-disconn} below, we have the following result.

\begin{cor}\label{cor:main}
Let $G$ be a connected reductive group over an algebraically closed field $\bk$ of pretty good characteristic.  For any nilpotent element $x$ in its Lie algebra, $(G, G^x_\red)$ is a Donkin pair.
\end{cor}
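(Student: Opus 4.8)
The plan is to deduce Corollary~\ref{cor:main} from Theorem~\ref{thm:main} by passing from the identity component $(G^x_\red)^\circ$ to the full, possibly disconnected, reductive group $G^x_\red$. Theorem~\ref{thm:main} already supplies the connected half of the statement: it tells us that $(G,(G^x_\red)^\circ)$ is a Donkin pair. So the only remaining task is to promote this to a statement about $G^x_\red$ itself, and this is precisely the role of Lemma~\ref{lem:donkin-disconn}.

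Before invoking that lemma, I would first check that its disconnectedness hypothesis is met, namely that $p$ does not divide the order of the component group $G^x_\red/(G^x_\red)^\circ$. This is where the passage from ``good'' to ``pretty good'' characteristic is needed. Under the pretty good hypothesis one has the isomorphism $G^x/(G^x)^\circ \cong G^x_\red/(G^x_\red)^\circ$, and the cited results \cite[Theorem~1.8]{hardesty} and \cite[Lemma~2.1]{ahjr} guarantee that $p$ is prime to the order of this finite group for every nilpotent $x$. This is exactly the condition from \cite{ahr} under which the finite-dimensional $G^x_\red$-modules form a highest-weight category, so that the notion of a good filtration for $G^x_\red$ is well-defined and the term ``Donkin pair'' has content for $(G,G^x_\red)$ in the first place.

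With both ingredients in place, the corollary follows by applying Lemma~\ref{lem:donkin-disconn} to $H = G^x_\red$: the lemma takes the connected Donkin pair $(G,(G^x_\red)^\circ)$ together with the prime-to-$p$ condition on $G^x_\red/(G^x_\red)^\circ$ as input, and outputs the disconnected Donkin pair $(G,G^x_\red)$. I do not expect any genuine obstacle at the level of the corollary itself, since all of the substantive work has already been absorbed into Theorem~\ref{thm:main} and into Lemma~\ref{lem:donkin-disconn}. The only point requiring real care is the verification of the prime-to-$p$ condition on the component group, as this is what distinguishes the pretty good hypothesis of the corollary from the merely good hypothesis of the theorem, and is what makes the highest-weight structure on $G^x_\red$-modules available to begin with.
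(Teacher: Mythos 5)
Your proposal is correct and follows exactly the paper's own route: Theorem~\ref{thm:main} supplies the connected Donkin pair, the pretty good hypothesis together with \cite[Theorem~1.8]{hardesty} and \cite[Lemma~2.1]{ahjr} gives that $p$ does not divide $|G^x_\red/(G^x_\red)^\circ|$, and Lemma~\ref{lem:donkin-disconn} then upgrades the pair to the possibly disconnected group $G^x_\red$. Your additional remark that the prime-to-$p$ condition is also what makes ``good filtration'' meaningful for $G^x_\red$-modules (via the highest-weight structure from \cite{ahr}) matches the paper's framing as well.
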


This corollary plays a key role in the proof of the Humphreys conjecture~\cite{ah:sccs}.

The paper is organized as follows: Section~\ref{sec:prelim} contains some general lemmas on Donkin pairs, along with a lengthly list of examples (some previously known, and some new).  Section~\ref{sec:main} gives the proof of Theorem~\ref{thm:main}.  The proof consists of a reduction to the quasi-simple case, followed by case-by-case arguments.  

\begin{rmk}
It would, of course, be desirable to have a uniform proof of Theorem~\ref{thm:main} that avoids case-by-case arguments, perhaps using the method of Frobenius splittings.  Thanks to a fundamental result of Mathieu~\cite{mathieu}, Theorem~\ref{thm:main} would come down to showing that the flag variety of $G$ admits a $(G^x_\red)^\circ$-canonical splitting.  According a result of van der Kallen~\cite{vdk}, this geometric condition is equivalent to a certain linear-algebraic condition (called the ``pairing condition'') on the Steinberg modules for $G$ and $(G^x_\red)^\circ$.  Unfortunately, for the moment, the pairing condition for these groups seems to be out of reach.
\end{rmk}

\section{Preliminaries}
\label{sec:prelim}

\subsection{General lemmas on Donkin pairs}

We begin with three easy statements about good filtrations.

\begin{lem}\label{lem:disconn-gf}
Let $H$ be a possibly disconnected reductive group over an algebraically closed field $\bk$.  Assume that the characteristic of $\bk$ does not divide $|H/H^\circ|$.  An $H$-module $M$ has a good filtration if and only if it has a good filtration as an $H^\circ$-module.
\end{lem}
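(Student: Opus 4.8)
The plan is to reduce both implications to the cohomological criterion for good filtrations, using Frobenius reciprocity for the finite-index inclusion $H^\circ \subset H$. Write $\Gamma = H/H^\circ$, so that $|\Gamma|$ is invertible in $\bk$ by hypothesis. By \cite{ahr} the categories of rational $H^\circ$- and $H$-modules are highest-weight categories; I will write $\Delta_{H^\circ}(\lambda)$ and $\Delta_H(\xi)$ for the respective standard (Weyl) modules. I would invoke the standard fact that a module $M$ has a good filtration if and only if $\Ext^1(\Delta(\cdot), M) = 0$ for every standard module, the vanishing then holding automatically in all positive degrees. I will also use three structural facts, all of which either appear in \cite{ahr} or follow from Clifford theory relative to $\Gamma$: (a) since $[H : H^\circ] = |\Gamma|$ is finite, $\Ind_{H^\circ}^H$ is exact and adjoint to $\Res^H_{H^\circ}$, and Eckmann--Shapiro (Frobenius reciprocity) gives $\Ext^n_H(\Ind_{H^\circ}^H N, M) \cong \Ext^n_{H^\circ}(N, \Res^H_{H^\circ} M)$ for all $n$; (b) $\Ind_{H^\circ}^H \Delta_{H^\circ}(\lambda)$ has a standard filtration as an $H$-module; and (c) every $\Delta_H(\xi)$ occurs as a direct summand of $\Ind_{H^\circ}^H \Delta_{H^\circ}(\lambda)$ for a suitable $\lambda$.

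For the forward implication, suppose $M$ has a good filtration as an $H$-module. For each dominant $\lambda$, fact (a) gives $\Ext^1_{H^\circ}(\Delta_{H^\circ}(\lambda), \Res^H_{H^\circ} M) \cong \Ext^1_H(\Ind_{H^\circ}^H \Delta_{H^\circ}(\lambda), M)$, and the right-hand side vanishes because $\Ind_{H^\circ}^H \Delta_{H^\circ}(\lambda)$ is standard-filtered by (b) while $M$ is $H$-good. Hence $\Res^H_{H^\circ} M$ satisfies the Ext-criterion and has a good $H^\circ$-filtration. For the converse, suppose $M$ has a good filtration as an $H^\circ$-module. Fix $\xi$ and choose $\lambda$ as in (c). Then $\Ext^1_H(\Delta_H(\xi), M)$ is a direct summand of $\Ext^1_H(\Ind_{H^\circ}^H \Delta_{H^\circ}(\lambda), M) \cong \Ext^1_{H^\circ}(\Delta_{H^\circ}(\lambda), \Res^H_{H^\circ} M)$, which vanishes since $M$ is $H^\circ$-good. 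As $\xi$ was arbitrary, $M$ satisfies the Ext-criterion for $H$ and has a good $H$-filtration.

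The routine inputs are the adjunction and exactness in (a), which hold for any finite-index closed subgroup. The point requiring care is the package (b)--(c) linking the two highest-weight structures: concretely, that $\Res^H_{H^\circ}\Delta_H(\xi)$ is a direct sum of $H^\circ$-Weyl modules forming a single $\Gamma$-orbit, and dually for induction. This is exactly where the hypothesis $p \nmid |\Gamma|$ enters---it makes the $\Gamma$-action semisimple, so that the relevant induced modules split off their standard summands---and it is the content of the highest-weight description in \cite{ahr}; if one does not wish to quote it directly, one must reprove it by Clifford theory, which I expect to be the main obstacle. I would also confirm that the Ext-criterion is being applied in the correct generality (for instance for possibly infinite-dimensional $M$), but this is standard for good filtrations.
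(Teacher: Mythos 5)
Your argument is correct, but it takes a genuinely different route from the paper's, so it is worth comparing the two. The paper's forward direction uses no Ext machinery at all: it quotes~\cite[Eq.~(3.3)]{ahr}, which says that costandard $H$-modules restrict to direct sums of costandard $H^\circ$-modules, so a good filtration of $M$ over $H$ simply refines to one over $H^\circ$. For the converse the paper does use the Ext-criterion, as you do, but where you invoke your facts (b)--(c) (induced standards are standard-filtered, and every $\Delta_H(\xi)$ is a direct summand of some $\Ind_{H^\circ}^H\Delta_{H^\circ}(\lambda)$) together with Eckmann--Shapiro, the paper instead uses the isomorphism $\Ext^1_H({-},M) \cong (\Ext^1_{H^\circ}({-},M))^{H/H^\circ}$ from the proof of~\cite[Lemma~2.18]{ahr}, combined once more with Eq.~(3.3). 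These two mechanisms are two faces of the same Clifford-theoretic coin: since $p \nmid |H/H^\circ|$, your fact (c) can be deduced from Eq.~(3.3) via the tensor identity $\Ind_{H^\circ}^H \Res^H_{H^\circ} N \cong N \otimes \bk[H/H^\circ]$ (the trivial module splits off the regular representation of $H/H^\circ$) and Krull--Schmidt, while the invariants formula is essentially what your biadjunction-plus-summand argument computes. What your route buys is symmetry --- each implication is a single application of the adjunction --- and it avoids quoting the invariants formula; what it costs is that the whole burden falls on (b)--(c), which you correctly flag as the crux and which must be extracted from the highest-weight structure of~\cite{ahr} (or re-proved by Clifford theory), and additionally your forward direction needs the Ext-criterion in the category of $H$-modules (i.e.\ for the disconnected group), whereas the paper's forward direction needs nothing beyond the decomposition of restricted costandards. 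Your two peripheral cautions --- that $\Ind_{H^\circ}^H$ is exact and biadjoint to restriction for a finite-index subgroup, and that the Ext-criterion applies to possibly infinite-dimensional modules --- are both sound and standard.
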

\begin{proof}
According to~\cite[Eq.~(3.3)]{ahr}, any costandard $H$-module regarded as an $H^\circ$-module is a direct sum of costandard $H^\circ$-modules.  Hence, any $H$-module with a good filtration has a good filtration as an $H^\circ$-module.

For the opposite implication, suppose $M$ is an $H$-module that has a good filtration as an $H^\circ$-module.  To show that it has a good filtration as an $H$-module, we must show that $\Ext^1_H({-},M)$ vanishes on standard $H$-modules.  As explained in the proof of~\cite[Lemma~2.18]{ahr}, we have
\[
\Ext^1_H({-},M) \cong (\Ext^1_{H^\circ}({-},M))^{H/H^\circ},
\]
and the right-hand side clearly vanishes on standard $H$-modules (using~\cite[Eq.~(3.3)]{ahr} again).
\end{proof}

\begin{lem}\label{lem:donkin-disconn}
Let $G$ be a connected, reductive group, and let $H \subset G$ be a possibly disconnected reductive subgroup.  Assume that the characteristic of $\bk$ does not divide $|H/H^\circ|$.  Then $(G,H)$ is a Donkin pair if and only if $(G,H^\circ)$ is a Donkin pair.
\end{lem}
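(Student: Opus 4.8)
The plan is to deduce this directly from Lemma~\ref{lem:disconn-gf} together with the transitivity of restriction. Both conditions---``$(G,H)$ is a Donkin pair'' and ``$(G,H^\circ)$ is a Donkin pair''---are universally quantified over the \emph{same} collection of objects, namely the $G$-modules admitting a good filtration. So it suffices to check, for a single such $G$-module $M$, that $\Res^G_H M$ has a good filtration as an $H$-module if and only if $\Res^G_{H^\circ} M$ has a good filtration as an $H^\circ$-module; the statement of the lemma then follows by quantifying over all $M$.

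For this module-by-module comparison I would set $N = \Res^G_H M$, regarded as an $H$-module, and note that restriction is transitive, so that $\Res^H_{H^\circ} N = \Res^G_{H^\circ} M$. The standing hypothesis that the characteristic of $\bk$ does not divide $|H/H^\circ|$ is exactly what is needed to apply Lemma~\ref{lem:disconn-gf} to $N$: that lemma tells us $N$ has a good filtration as an $H$-module precisely when $\Res^H_{H^\circ} N$ has a good filtration as an $H^\circ$-module. Combining the two observations yields the desired equivalence for each fixed $M$, and hence the lemma.

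There is essentially no serious obstacle here, since all of the real content has already been packaged into Lemma~\ref{lem:disconn-gf}; what remains is the purely formal bookkeeping of verifying that the two restriction functors $\Res^G_{H^\circ}$ and $\Res^H_{H^\circ}\circ\Res^G_H$ agree. If anything, the only point requiring a moment's care is to confirm that the characteristic hypothesis in the present statement coincides with the hypothesis demanded by Lemma~\ref{lem:disconn-gf}, so that the latter applies verbatim; this is immediate, as both are the single condition that $\mathrm{char}(\bk)$ does not divide $|H/H^\circ|$.
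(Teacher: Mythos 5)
Your proposal is correct and matches the paper's approach: the paper simply declares the lemma to be an immediate consequence of Lemma~\ref{lem:disconn-gf}, and your argument is precisely the routine unwinding of that implication (applying Lemma~\ref{lem:disconn-gf} to $\Res^G_H M$ for each $G$-module $M$ with a good filtration, using transitivity of restriction). No gaps; you have merely written out the bookkeeping that the paper leaves implicit.
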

\begin{proof}
This is an immediate consequence of Lemma~\ref{lem:disconn-gf}.
\end{proof}

\begin{lem}\label{lem:derived}
Let $G$ be a connected, reductive group, and let $G'$ be its derived subgroup.  Let $H \subset G$ be a connected, reductive subgroup.  Then $(G,H)$ is a Donkin pair if and only if $(G',(G' \cap H)^\circ)$ is a Donkin pair.
\end{lem}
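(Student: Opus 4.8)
The plan is to reduce both implications to two standard properties of a \emph{co-central} inclusion $L \trianglelefteq K$, by which I mean a connected reductive normal subgroup $L$ of a connected reductive group $K$ such that $K/L$ is a torus (equivalently, $[K,K] \subseteq L$). The key structural observation is that both $(G,G')$ and $(H,H')$, where $H' := (G' \cap H)^\circ$, are of this form. Indeed $G'$ is normal in $G$ with torus quotient $G/G' \cong Z(G)^\circ/(Z(G)^\circ \cap G')$; and since $G' \trianglelefteq G$ we get $G' \cap H \trianglelefteq H$, so $H'$ is a connected reductive normal subgroup of $H$, while $[H,H] \subseteq [G,G] \cap H = G' \cap H$ gives $[H,H] \subseteq H'$, whence $H/H'$ is a torus.

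First I would record two facts for an arbitrary co-central $L \trianglelefteq K$. \textbf{(a)} Choosing a maximal torus $T_L$ of $L$ inside a maximal torus $T_K$ of $K$, the groups $K$ and $L$ share the same flag variety and the same root datum, so restriction carries costandard modules to costandard modules, $\Res^K_L H^0_K(\lambda) \cong H^0_L(\lambda|_{T_L})$ (and likewise for standard modules); moreover, since $T_L \subseteq T_K$ is a subtorus the restriction $X^*(T_K) \twoheadrightarrow X^*(T_L)$ is surjective, and because the coroots of $K$ and $L$ coincide it sends dominant weights to dominant weights, so \emph{every} costandard $L$-module is the restriction of a costandard $K$-module. \textbf{(b)} If an $L$-module underlying a $K$-module has a good filtration over $L$, then it has one over $K$. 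For (b) I would run exactly the argument of Lemma~\ref{lem:disconn-gf}, now with the Lyndon--Hochschild--Serre spectral sequence for $L \trianglelefteq K$: since $K/L$ is a torus, hence linearly reductive, this collapses to $\Ext^1_K({-},N) \cong (\Ext^1_L({-},N))^{K/L}$, and the right-hand side vanishes on standard $K$-modules, because these restrict to standard $L$-modules by (a) and $N|_L$ has a good filtration.

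With these in hand both implications follow, using the standard reduction that $(K,L)$ is a Donkin pair as soon as every costandard $K$-module restricts to an $L$-module with a good filtration (good filtrations being closed under extensions). For the forward implication, assume $(G,H)$ is Donkin and let $\mu$ be dominant for $G'$; by (a) lift it to a dominant $\lambda$ for $G$ with $H^0_G(\lambda)|_{G'} \cong H^0_{G'}(\mu)$. Then $H^0_G(\lambda)|_H$ has a good filtration by hypothesis, and restricting further to $H' \subseteq H$ and using (a) for $(H,H')$ shows that $H^0_{G'}(\mu)|_{H'} = H^0_G(\lambda)|_{H'}$ has a good filtration; as $\mu$ was arbitrary, $(G',H')$ is Donkin. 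For the reverse implication, assume $(G',H')$ is Donkin and let $\lambda$ be dominant for $G$. By (a) for $(G,G')$, the module $H^0_G(\lambda)|_{G'} \cong H^0_{G'}(\lambda|_{T'})$ has a good filtration over $H'$; thus the $H$-module $H^0_G(\lambda)|_H$ restricts to an $H'$-module with a good filtration, and (b) for $(H,H')$ upgrades this to a good filtration over $H$. Hence $(G,H)$ is Donkin.

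I expect the only real work to lie in step (b)---the descent of good filtrations along a co-central inclusion---which is the analogue of Lemma~\ref{lem:disconn-gf} with a torus in place of a finite group; the remaining subtlety is purely structural, namely verifying that $H' = (G' \cap H)^\circ$ really is co-central in $H$ (normality, containment of $[H,H]$, and torus quotient) so that (a) and (b) apply verbatim. The weight-lifting half of (a) is what lets the reverse implication avoid any induction or extension of $G'$-modules to $G$: every costandard $G'$-module is already visible inside a costandard $G$-module.
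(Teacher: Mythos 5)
Your proposal is correct and takes essentially the same route as the paper: both proofs hinge on the restriction--induction compatibility of \cite[I.6.14(1)]{jan:rag} for subgroups squeezed between the derived group and the full group (your fact (a)), together with surjectivity of weight restriction $\bX(T)\twoheadrightarrow\bX(T'')$ to lift dominant weights, applied to the same two co-central inclusions $G'\subseteq G$ and $(G'\cap H)^\circ\subseteq H$. The only divergence is your step (b): where the paper asserts rather tersely that the weight-lifting isomorphism for $(H,(G'\cap H)^\circ)$ already implies an $H$-module has a good filtration if and only if its restriction does, you prove this descent explicitly via the Lyndon--Hochschild--Serre spectral sequence and linear reductivity of the torus quotient, mirroring the paper's own Lemma~\ref{lem:disconn-gf}---a more carefully justified rendering of the same step rather than a different approach.
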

\begin{proof}
Let $T \subset B \subset G$ denote a maximal torus and Borel subgroup respectively, and suppose that $G''$ is any closed connected subgroup satisfying 
$G' \subseteq G'' \subseteq G$. Now let $T'' = G''\cap T$, $B'' = G'' \cap B$, and observe that by \cite[I.6.14(1)]{jan:rag}, we have
\begin{equation}\label{eqn:ind-rest-comm}
\Res^G_{G''}\Ind_B^GM \cong \Ind_{B''}^{G''}\Res^B_{B''}M
\end{equation}
 for any $B$-module $M$. Thus, for any dominant weight $\lambda \in \bX(T)^+$ where we set
 $\lambda'' = \Res^T_{T''}(\lambda) \in \bX(T'')^+$, it follows that 
 $\Res^G_{G''}\Ind_B^G(\lambda) \cong \Ind_{B''}^{G''}(\lambda'')$.
Thus $(G,G'')$ is always a Donkin pair. 
Furthermore, if we let $H' \subseteq H$ be the derived subgroup and $H'' = (G'\cap H)^\circ$, 
then  $H' \subseteq H'' \subseteq H$.  We can therefore apply \cite[I.6.14(1)]{jan:rag} again to show that 
$(H,H'')$ is a Donkin pair. 

Now suppose $(G, H)$ is a Donkin pair. In this case it immediately follows from above that $(G, H'')$ is a 
Donkin pair. Moreover, if we let $T' = G'\cap T$ and $B' = G'\cap B$, then \eqref{eqn:ind-rest-comm} actually implies that for any $\lambda' \in \bX(T')^+$, there exists $\lambda \in \bX(T)^+$ with $\lambda' = \Res^T_{T'}(\lambda)$ such that
\[
 \Res^G_{G'}\Ind_B^G(\lambda) \cong \Ind_{B'}^{G'}(\lambda').
\]
In particular, 
\[
 \Res^{G'}_{H''}\Ind_{B'}^{G'}(\lambda') \cong  \Res^{G}_{H''}\Ind_{B}^{G}(\lambda)
\]
has a good filtration as an $H''$-module, and hence, $(G', H'')$ is also a Donkin pair. 

Conversely, suppose that $(G', H'')$ is a Donkin pair. We can first deduce that 
$(G, H'')$ is a Donkin pair from the fact that $(G,G')$ is a Donkin pair. Also, by similar arguments as above we
can see that for any $\mu'' \in \bX(H''\cap T)^+$, there exists some $\mu \in \bX(H\cap T)^+$ with
$
\mu'' = \Res^{H\cap T}_{H''\cap T}(\mu),
$
such that 
\[
 \Res^H_{H''}\Ind_{H\cap B}^H(\mu) \cong \Ind_{H''\cap B}^{H''}(\mu'').
\]
This implies that an $H$-module $M$ has a good filtration if and only if the $H''$-module $\Res^H_{H''}M$ has 
a good filtration. Therefore, $(G,H)$ is also a Donkin pair. 
\end{proof}

\subsection{Examples of Donkin pairs}

The following proposition collects a number of known examples of Donkin pairs.  The last five parts of the proposition deal with various examples where $G$ is quasi-simple and simply connected.  For pairs of the form $(\Spin_n, H)$, it is usually more convenient to describe the image $H'$ of $H$ under the map $\pi: \Spin_n \to \SO_n$.  Of course, $H$ can be recovered from $H'$, as the identity component of $\pi^{-1}(H)$.  We use the notation that
\[
(\SO_n,H')^\sim = (\Spin_n, H).
\]
It should be noted that the following proposition does not exhaust the known examples in the literature: for instance, according to~\cite{brundan}, there is a Donkin pair of type $(\rB_3, \rG_2)$, but this example is not needed in the present paper.

\begin{prop}\label{prop:pairlist}
Let $G$ be a connected, reductive group, and let $H \subset G$ be a closed, connected, reductive subgroup.  If the pair $(G,H)$ satisfies one of the following conditions, then it is a Donkin pair.
\begin{enumerate}
\item $G = H \times \cdots \times H$, and $H \hookrightarrow G$ is the diagonal embedding.\label{it:diag}
\item $H$ is a Levi subgroup of $G$.\label{it:levi}
\end{enumerate}
For the remaining parts, assume that $G$ is quasi-simple and simply connected.
\begin{enumerate}
\setcounter{enumi}{2}
\item $G$ is of simply-laced type, and $H$ is the fixed-point set of a diagram automorphism of $G$:\label{it:auto}
\begin{align*}
(\rA_{2n-1}, \rC_n) &= (\SL_{2n},\Sp_{2n}) & (\rD_4,\rG_2)&=(\Spin_8, \rG_2) \\
(\rD_n, \rB_{n-1}) &= (\SO_{2n}, \SO_{2n-1})^\sim & &( \rE_6, \rF_4)
\end{align*}
\item Certain embeddings of classical groups:\label{it:class}
\begin{align*}
\left.
\begin{array}{c}
(\rA_{2n}, \rB_n) \\
(\rA_{2n-1}, \rD_n)
\end{array}
\right\}
&= (\SL_r, \SO_r) \qquad (p > 2) \\
(\rA_{2n-1}, \rC_n)
&= (\SL_{2n}, \Sp_{2n}) \\
\left.
\begin{array}{c}
(\rB_{n+m}, \rB_n\rD_m) \\
(\rD_{n+m}, \rD_n\rD_m) \\
(\rD_{n+m+1}, \rB_n\rB_m) 
\end{array}
\right\} &= (\SO_{r+s}, \SO_r \times \SO_s)^\sim \quad(p > 2)
\\
(\rC_{n+m}, \rC_n\rC_m) &= (\Sp_{2n+2m}, \Sp_{2n} \times \Sp_{2m})
\end{align*}
\item Certain maximal-rank subgroups of exceptional groups:\label{it:max}
\begin{align*}
&&
   (\rE_8 &, \rA_2\rE_6)\quad (p > 5) \\
(\rE_8 &, \rD_8)\quad (p > 2) &
   (\rE_8 &, \rA_1\rA_2\rA_5)\quad (p > 5) \\
(\rE_8 &, \rA_1\rE_7)\quad (p > 2 )  &
   (\rE_8 &, \rA_3\rD_5)\quad (p > 5) \\
(\rE_7 &, \rA_1\rD_6)\quad (p > 2) &
   (\rE_8 &, \rA_4\rA_4)\quad (p > 5) \\
(\rF_4 &, \rB_4) \quad (p > 2) &
   (\rF_4 &, \rA_3\rA_1)\quad (p > 3) \\
&&
   (\rG_2 &, \rA_1\rA_1)
\end{align*}
\item Certain restricted irreducible representations:\label{it:resirr}
\begin{align*}
(\rA_n &, \rA_1) \quad (p > n) \\
(\rA_7 &, \rA_2) \quad (p > 3) \\
(\rA_6 &, \rG_2) \quad (p > 3)
\end{align*}
\item Tensor product embeddings of classical groups ($p > 2$):\label{it:tens}
\begin{align*}
\left.
\begin{array}{c}
(\rC_{(2n+1)m}, \rB_n)\\
(\rC_{2nm} , \rD_n)
\end{array}
\right\}
&= (\Sp_{2rm}, \SO_r)
&
\left.
\begin{array}{c}
(\rB_{n+m+2nm}, \rB_n) \\
(\rD_{(2n+1)m}, \rB_n)\\
(\rD_{nm} , \rD_n)
\end{array}
\right\}
&= (\SO_{rs}, \SO_r)^\sim
\\
(\rD_{2nm}, \rC_n) &= (\SO_{4nm}, \Sp_{2n})^\sim
&
(\rC_{nm}, \rC_n) &= (\Sp_{2nm}, \Sp_{2n})
\end{align*}
\end{enumerate}
\end{prop}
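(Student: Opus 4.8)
The plan is to reduce the Donkin-pair condition to a statement about costandard modules and then to dispatch the seven families by a mixture of general principles and case-by-case input. First I would record the basic reduction: since $\Res^G_H$ is exact and a module has a good filtration precisely when it is filtered by costandard modules, $(G,H)$ is a Donkin pair if and only if $\Res^G_H\Ind_B^G(\lambda)$ has a good $H$-filtration for every dominant $\lambda \in \bX(T)^+$. I would also use two closure properties of the relation: transitivity (if $(G,H)$ and $(H,K)$ are Donkin pairs then so is $(G,K)$, since restriction composes), and the fact that the inclusion of a direct factor $H \hookrightarrow H \times K$ is always a Donkin pair (a costandard module of the product restricts to a direct sum of copies of a single costandard $H$-module). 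Together with Lemma~\ref{lem:derived} and Lemma~\ref{lem:donkin-disconn}, these let me pass freely between a simply connected group and its isogeny quotients, and in particular between the $\Spin_n$ and $\SO_n$ pictures recorded by the $({-})^\sim$ notation.

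With this in hand, parts~\ref{it:diag} and~\ref{it:levi} are immediate. For~\ref{it:diag}, the costandard modules of a product are external tensor products, and their restriction along the diagonal is the ordinary tensor product $\Ind_B^H(\lambda_1)\otimes\cdots\otimes\Ind_B^H(\lambda_k)$, which has a good filtration by Mathieu's theorem~\cite{mathieu}. For~\ref{it:levi}, the restriction to a Levi subgroup $L$ of a costandard $G$-module has a good $L$-filtration; this is classical and can be read off from the geometry of parabolic induction (cf.~\cite{jan:rag}).

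The classical families~\ref{it:auto},~\ref{it:class}, and~\ref{it:tens} I would treat together, since each embedding is built from symmetric subgroups ($\Sp$ or $\SO$ inside $\SL$, a diagram-automorphism fixed-point group, or an orthogonal/symplectic direct-sum decomposition) or from tensor-product maps. Following the strategy recalled in the remark, the required good filtrations follow once the relevant homogeneous space carries an $H$-canonical Frobenius splitting, equivalently once van der Kallen's pairing condition~\cite{vdk} holds; for these classical configurations the necessary splittings and branching results are available, in sharp contrast to the groups $(G^x_\red)^\circ$ appearing in the main theorem. The tensor-product cases~\ref{it:tens} reduce further: each tensor embedding $H \hookrightarrow \Sp$ (or $\SO$) factors through a product-type embedding $H \times K \hookrightarrow \Sp$ (or $\SO$) with $K$ again classical, and one combines the splitting for this larger embedding with Mathieu's theorem and the factor-projection fact above.

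The genuinely case-by-case parts, and the \emph{main obstacle}, are the exceptional maximal-rank subgroups~\ref{it:max} and the small irreducible embeddings~\ref{it:resirr}: here there is no uniform argument, and each pair (for instance $(\rE_8,\rA_2\rE_6)$, or the principal $\SL_2$ in $\SL_{n+1}$ giving $(\rA_n,\rA_1)$ for $p > n$) must be checked individually under the stated characteristic hypotheses, either by exhibiting the appropriate Frobenius splitting or by citing an explicit branching computation. The delicate points are precisely the bounds on $p$, since the pairing/splitting condition can fail for small characteristic; pinning down these bounds is exactly what forces the case analysis rather than a single clean proof.
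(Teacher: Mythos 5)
Your general framework (Donkin pairs are transitive under composition; projection to a direct factor is a Donkin pair; parts~\eqref{it:diag} and~\eqref{it:levi} via Mathieu; parts~\eqref{it:auto}--\eqref{it:max} by citation to Brundan, van der Kallen, and Hague--McNinch) is sound and matches the paper. But there is a genuine gap in your treatment of part~\eqref{it:tens}, which is precisely the part where the paper has to do real work. You propose to factor the tensor embedding as $H \hookrightarrow H \times K \hookrightarrow G$, where $H \times K = \Aut(V_1,B_1)^\circ \times \Aut(V_2,B_2)^\circ$ sits inside $G = \Aut(V_1\otimes V_2, B_1\otimes B_2)^\circ$, and then invoke ``the splitting for this larger embedding'' together with transitivity. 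The problem is that the pair $(G, H\times K)$ --- for instance $\SO_r \times \Sp_{2m}$ embedded in $\Sp_{2rm}$ via $B_1 \otimes B_2$ --- is not established anywhere: it is not among parts~\eqref{it:diag}--\eqref{it:max} (the two-classical-factor pairs in part~\eqref{it:class} are \emph{direct-sum} embeddings $\SO_r\times\SO_s \subset \SO_{r+s}$, not tensor ones), and it is not in the cited literature either, because tensor-product subgroups of classical groups are maximal but are neither fixed-point subgroups of involutions nor spherical, so Brundan's dense-orbit method does not apply, and no canonical Frobenius splitting or verification of van der Kallen's pairing condition is known for them. Asserting the Donkin property for $(G, H\times K)$ is thus at least as strong as the statement you are proving; the step is circular.

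The paper's proof avoids the two-factor pair entirely by choosing explicit bases of $V_2$ and factoring the \emph{one-factor} embedding through previously established cases. When $\varepsilon_2 = +1$, an orthonormal basis $x_1,\dots,x_s$ of $V_2$ splits $V_1 \otimes V_2$ into an orthogonal direct sum of copies of $V_1$, so the embedding factors as a diagonal embedding (part~\eqref{it:diag}) followed by a direct-sum embedding (part~\eqref{it:class}). When $\varepsilon_2 = -1$ and $\dim V_2 = 2$, a hyperbolic basis $\{x,y\}$ exhibits $V_1\otimes x$ and $V_1 \otimes y$ as complementary Lagrangians, and the embedding factors as $H \hookrightarrow \GL(V_1)$ (part~\eqref{it:class}, via Lemma~\ref{lem:derived}) followed by the inclusion of $\GL(V_1)$ as a Levi subgroup (part~\eqref{it:levi}); the case $\dim V_2 = 2m > 2$ is a combination of these two constructions. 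At no point does the argument need the second tensor factor $K$ to be part of a Donkin pair. A secondary, smaller issue: in part~\eqref{it:resirr} your proposal to ``cite an explicit branching computation'' is not available for $(\rA_7,\rA_2)$ and $(\rA_6,\rG_2)$ --- these are new; the paper first reduces, via a criterion of Brundan and Hague--McNinch, to showing that the exterior algebra $\bigwedge^\bullet V$ of the defining representation has a good $H$-filtration, and then verifies this by a character computation in LiE under the stated bound on $p$.
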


The details of the embeddings in parts~\eqref{it:resirr} and~\eqref{it:tens} will be described below.  

\begin{proof}[Proofs for parts~\eqref{it:levi}--\eqref{it:max}]
Parts~\eqref{it:diag} and~\eqref{it:levi} are due to Mathieu~\cite{mathieu} (following earlier work of Donkin~\cite{donkin} that covered most cases).  Parts~\eqref{it:auto} and~\eqref{it:class}, with the exception of the pair $(\rE_6,\rF_4)$, are due to Brundan~\cite{brundan}.  The pair $(\rG_2,\rA_1\rA_1)$ in part~\eqref{it:max} is also due to Brundan~\cite{brundan}.  The pair $(\rE_6,\rF_4)$ and the pairs in the first column of part~\eqref{it:max} are due to van der Kallen~\cite{vdk}.  The pairs in the second column of part~\eqref{it:max} are due to Hague--McNinch~\cite{hm}.
\end{proof}

\begin{proof}[Proof of part~\eqref{it:resirr}]
Each pair $(\rA_n,H) = (\SL_{n+1},H)$ in this statement arises from some $(n+1)$-dimensional representation of $H$.  Call that representation $V$.   The representations $V$ are as follows:
\begin{itemize}
\item $(\rA_n,\rA_1)$: the dual Weyl module for $\SL_2$ of highest weight $n$
\item $(\rA_7,\rA_2)$: the adjoint representation of $\PSL_3$
\item $(\rA_6,\rG_2)$: the $7$-dimensional dual Weyl module whose highest weight is the short dominant root
\end{itemize}
According to~\cite[Lemma~3.2(iv)]{brundan} or~\cite[\S 3.2.6]{hm}, to prove the claim, we must show that each exterior algebra $\bigwedge^\bullet V$ has a good filtration as an $H$-module.  For $(\rA_n,\rA_1)$, this is shown in~\cite[\S 3.4.3]{hm}.  For $(\rA_7, \rA_2)$ and $(\rA_6, \rG_2)$, explicit calculations using the LiE software package~\cite{lie} show that the character of $\bigwedge^\bullet V$ is the sum of characters of dual Weyl modules whose highest weights are restricted weights when $p > 3$.
\end{proof}

\begin{proof}[Proof of part~\eqref{it:tens}]
To define the group embeddings in this statement, we will assume that $G$ is either $\Sp_{2n}$ or $\SO_n$.  However, in the latter case, the proof that $(G,H)$ is a Donkin pair will also imply the corresponding statement for $G = \Spin_n$.

Let $V_1$ be a vector space equipped with a nondegenerate bilinear form $B_1$ satisfying $B_1(v,w) = \varepsilon_1 B_1(w,v)$, where $\varepsilon_1 = \pm 1$, and let $\Aut(V_1,B_1)^\circ$ be the connected group of linear automorphisms of $V_1$ that preserve $B_1$.  This group is either $\SO_{\dim V}$ or $\Sp_{\dim V}$, depending on $\varepsilon_1$.  Let $V_2$, $B_2$, $\varepsilon_2$ be another collection of similar data.  Then $B_1 \otimes B_2$ is a nondegenerate pairing on $V_1 \otimes V_2$, with sign $\varepsilon_1\varepsilon_2$.  We obtain an embedding
\[
\Aut(V_1,B_1)^\circ \times \Aut(V_2,B_2)^\circ \hookrightarrow \Aut(V_1 \otimes V_2, B_1 \otimes B_2)^\circ.
\]
Now restrict to just one factor:
\begin{equation}\label{eqn:tensor-defn}
\Aut(V_1,B_1)^\circ \hookrightarrow \Aut(V_1 \otimes V_2, B_1 \otimes B_2)^\circ.
\end{equation}
The four kinds of pairs listed in the statement are all instances of this embedding, depending on the signs $\varepsilon_1$ and $\varepsilon_2$.  We will now prove that
\[
(G,H) = (\Aut(V_1 \otimes V_2, B_1 \otimes B_2)^\circ, \Aut(V_1,B_1)^\circ)
\]
is a Donkin pair.  Let $r = \dim V_1$ and $s = \dim V_2$.

Suppose first that $\varepsilon_2 = 1$.  Then~\eqref{eqn:tensor-defn} corresponds to either $(\SO_{rs}, \SO_r)$ or $(\Sp_{rs}, \Sp_{r})$.  In this case, $V_2$ admits an orthonormal basis $x_1, \ldots, x_s$, where
\[
B_2(x_i,x_j) = \delta_{ij}.
\]
Then the group $\Aut(V_1,B_1)^\circ$ preserves each $V_1 \otimes x_i \subset V_1 \otimes V_2$.  In this case, the embedding~\eqref{eqn:tensor-defn} can be factored as
\[
\Aut(V_1,B_1)^\circ \idiag \underbrace{\Aut(V_1,B_1)^\circ \times \cdots \times \Aut(V_1,B_1)^\circ}_{\text{$s$ copies}} \iclass  \Aut(V_1 \otimes V_2, B_1 \otimes B_2)^\circ.
\]
The first map is a diagonal embedding; it results in a Donkin pair by part~\eqref{it:diag} of the proposition.  The second embedding gives a Donkin pair by part~\eqref{it:class}.

Next, suppose that $\varepsilon_2 = -1$, and assume for now that $\dim V_2 = 2$.  Choose a basis $\{x,y\}$ for $V_2$ such that $B_2(x,y) = 1$.  Then $V_1 \otimes x$ and $V_1 \otimes y$ are both maximal isotropic subspaces of $V_1 \otimes V_2$.  Define an action of $\GL(V_1)$ on $V_1 \otimes V_2$ as follows:
\[
\begin{aligned}
g \cdot (v \otimes x) &= (gv) \otimes x, \\
g \cdot (v \otimes y) &= ((g^{\mathrm{t}})^{-1}v) \otimes y
\end{aligned}
\qquad
\text{for $g \in \GL(V_1)$,}
\]
where $g^{\mathrm{t}}$ denotes the adjoint operator to $g$ with respect to the nondegenerate form on $V_1$.  This action defines an embedding of $\GL(V_1)$ in $\Aut(V_1 \otimes V_2, B_1 \otimes B_2)^\circ$.  In fact, it identifies $\GL(V_1)$ with a Levi subgroup of $\Aut(V_1 \otimes V_2, B_1 \otimes B_2)^\circ$.  (This is the usual embedding of $\GL_r$ as a Levi subgroup in either $\SO_{2r}$ or $\Sp_{2r}$.)  The embedding~\eqref{eqn:tensor-defn} then factors as
\[
\Aut(V_1,B_1)^\circ \iclass \GL(V_1) \ilevi \Aut(V_1 \otimes V_2, B_1 \otimes B_2)^\circ.
\]
The first embedding gives a Donkin pair by part~\eqref{it:class} of the proposition, and the second by part~\eqref{it:levi}.

Finally, suppose $\varepsilon_2 = -1$ and $s = \dim V_2 > 2$.  This dimension must still be even, say $s = 2m$.  Choose a basis $x_1, \ldots, x_m, y_1, \ldots, y_m$ for $V_2$ such that
\[
B_2(x_i,x_j) = B_2(y_i,y_j) = 0,
\qquad
B_2(x_i,y_j) = \delta_{ij}.
\]
Let $V_2^{(i)}$ be the $2$-dimensional subspace spanned by $x_i$ and $y_i$.  Then $B_2$ restricts to a nondegenerate symplectic form $B_2^{(i)}$ on $V_2^{(i)}$.  We factor~\eqref{eqn:tensor-defn} as follows:
\begin{multline*}
\Aut(V_1,B_1)^\circ \idiag
\underbrace{\Aut(V_1,B_1)^\circ \times \cdots \times \Aut(V_1,B_1)^\circ}_{\text{$m$ copies}} \\
\hookrightarrow \Aut(V_1 \otimes V_2^{(1)},B_1 \otimes B_2^{(1)})^\circ \times \cdots \times \Aut(V_1 \otimes V_2^{(m)},B_1 \otimes B_2^{(m)})^\circ \\
\iclass \Aut(V_1 \otimes V_2, B_1 \otimes B_2)^\circ.
\end{multline*}
Here, the first arrow is a diagonal embedding (part~\eqref{it:diag} of the proposition); the second arrow is several instances of the embedding from the previous paragraph (since $\dim V_2^{(i)} = 2$); and the last arrow comes from part~\eqref{it:class} of the proposition.  We thus again obtain a Donkin pair.
\end{proof}

\section{Proof of Theorem~\ref{thm:main}}
\label{sec:main}

\subsection{Reduction to the quasi-simple case}

Let $G$ be an arbitrary connected reductive group in good characteristic.
For any nilpotent element $x \in \Lie(G)$, there exists a cocharacter 
$\tau: \Gm \rightarrow G$ and a Levi subgroup $L_\tau \subset G$ such that $L_\tau$ is the centralizer of 
the subgroup $\tau(\Gm)$, where
$
G^x_\red = L_\tau \cap G^x$ (cf. \cite[5.10]{jan:norp}).

If we let $G'$ be the derived subgroup of $G$, then any nilpotent element $x$ for $G$ also satisfies $x \in \Lie(G')$, 
and by \cite[5.9]{jan:norp},  $\tau(\Gm) \subset G' \subseteq G$. In particular, $(G')^x = G'\cap G^x$ and
$L'_{\tau} = G'\cap L_{\tau}$
is the centralizer of $\tau(\Gm)$ in $G'$. Thus,
\[
(G')^x_{\red} = G' \cap G^x_\red. 
\] 
It now follows from Lemma~\ref{lem:derived} that $(G,(G^x_\red)^\circ)$ is a Donkin pair if and only if 
$(G', ((G')^x_\red)^\circ)$ is a Donkin pair. 
So we can reduce to the case where $G$ is semi-simple. 

Suppose now that $\pi: G \twoheadrightarrow \bar G$ is an isogeny (i.e.
surjective with finite central kernel), where $G$ is an arbitrary connected reductive group in good characteristic.  Then, by~\cite[Proposition~2.7(a)]{jan:norp}, $\pi$ induces a bijection between the nilpotent elements in $\Lie(G)$ and those in $\Lie(\bar G)$, and for any nilpotent element $x \in \Lie(G)$, we have $\pi(G^x) = \bar G^{\pi(x)}$. Moreover, by similar arguments as above we
can also deduce that $\pi(G^x_\red) = \bar G^{\pi(x)}_\red$ (cf. \cite[5.9]{jan:norp}). In particular, 
\[
\pi((G^x_\red)^\circ) = \pi(G^x_\red)^\circ = (\bar G^{\pi(x)}_\red)^\circ,
\]
since any surjective morphism of algebraic groups takes the identity component to the identity component. 

Let $H = (G^x_\red)^\circ$ and $\bar H = (\bar G^{\pi(x)}_\red)^\circ$, and note that for any $\bar G$-module $M$, there is a 
natural isomorphism 
\[
\Res^G_H\Res^{\bar G}_GM \cong \Res^{\bar H}_{H}\Res^{\bar G}_{\bar H}M. 
\]
From this we can see that if $(G,H)$ is a Donkin pair, then $(\bar G,\bar H)$ must also be a Donkin pair, since it is 
straightforward to check that a
$\bar G$-module $M$ (resp.~an $\bar H$-module $N$) has a good filtration if and only if 
 $\Res^{\bar G}_G M$ (resp.~$\Res^{\bar H}_{H} N$) has a good filtration. This allows us to reduce to the case
 where $G$ is semisimple and simply connected. 

Finally, suppose that that $G = G_1 \times G_2$ where $G_1$, $G_2$ are connected reductive groups in good characteristic. Let 
$x = (x_1,x_2) \in \Lie(G_1)\oplus \Lie(G_2)$ be an arbitrary nilpotent element. We can immediately see that
\[
(G^x_\red)^\circ = ((G_1)^{x_1}_\red)^\circ \times  ((G_2)^{x_2}_\red)^\circ.
\]
It now follows from the general properties of induction for direct products (see \cite[I.3.8]{jan:rag}) that 
$(G,(G^x_\red)^\circ)$ is a Donkin pair if and only if $(G_1,((G_1)^{x_1}_\red)^\circ)$ and 
$(G_2,((G_2)^{x_2}_\red)^\circ)$ are Donkin pairs. 
Therefore, by the well-known fact that any simply connected semisimple group is a 
direct product of quasi-simple simply connected groups, we can reduce the proof of Theorem~\ref{thm:main} to the 
case where $G$ is quasi-simple. 

\subsection{Proof for classical groups}

We now prove the theorem for the groups $\GL_n$, $\Sp_n$, and $\Spin_n$.  For the last case, we will actually describe the group $(G^x_\red)^\circ$ and its embedding in $G$ for $\SO_n$ instead, but the proof of the Donkin pair property will also apply to $\Spin_n$.

Let $x$ be a nilpotent element in the Lie algebra of one of $\GL_n$, $\Sp_n$, or $\SO_n$.  Let $\mathbf{s} = [s_1^{r_1}, s_2^{r_2}, \ldots, s_k^{r_k}]$ be the partition of $n$ that records the sizes of the Jordan blocks of $x$.  (This means that $x$ has $r_1$ Jordan blocks of size $s_1$, and $r_2$ Jordan blocks of size $s_2$, etc.)  The vector space $V = \bk^n$ can be decomposed as
\[
V = V^{(1)} \oplus V^{(2)} \oplus \cdots \oplus V^{(k)}
\]
where each $V^{(i)}$ is preserved by $x$, and $x$ acts on $V^{(i)}$ by Jordan blocks of size $s_i$.  (Thus, $\dim V^{(i)} = r_is_i$.)  When $G$ is $\Sp_n$ or $\SO_n$, the nondegenerate bilinear form on $V$ restricts to a nondegenerate form of the same type on each $V^{(i)}$.

The description of $(G^x_\red)^\circ$ in~\cite[Chapter~3]{ls} shows that it factors through the appropriate embedding below:
\begin{align*}
\GL(V^{(1)}) \times \cdots \times \GL(V^{(k)}) &\hookrightarrow \GL(V) \\
\Sp(V^{(1)}) \times \cdots \times \Sp(V^{(k)}) &\hookrightarrow \Sp(V) \\
\SO(V^{(1)}) \times \cdots \times \SO(V^{(k)}) &\hookrightarrow \SO(V)
\end{align*}
All three of these embeddings give Donkin pairs: in the case of $\GL_n$, it is an inclusion of a Levi subgroup (Proposition~\ref{prop:pairlist}\eqref{it:levi}); and in the case of $\Sp_n$ or $\SO_n$, it falls under Proposition~\ref{prop:pairlist}\eqref{it:class}.

We can therefore reduce to the case where $x$ has Jordan blocks of a single size.  Suppose from now on that $\mathbf{s} = [s^r]$.  Then there exists a vector space isomorphism
\[
V \cong V_1 \otimes V_2
\]
where $\dim V_1 = r$ and $\dim V_2 = s$, and such that $x$ corresponds to $\mathrm{id}_{V_1} \otimes N$, where $N: V_2 \to V_2$ is a nilpotent operator with a single Jordan block (of size $s$).  

Suppose now that $G = \GL(V)$.  Then, according to~\cite[Proposition~3.8]{ls}, we have $(G^x_\red)^\circ \cong \GL(V_1)$. Choose a basis $\{v_1, \ldots, v_s\}$ for $V_2$.  The embedding of $(G^x_\red)^\circ$ in $G$ factors as
\[
\GL(V_1) \hookrightarrow \GL(V_1 \otimes v_1) \times \cdots \times \GL(V_1 \otimes v_s) \hookrightarrow \GL(V).
\]
The first map above is a diagonal embedding (Proposition~\ref{prop:pairlist}\eqref{it:diag}), and the second is the inclusion of a Levi subgroup (Proposition~\ref{prop:pairlist}\eqref{it:levi}), so $(G, (G^x_\red)^\circ)$ is a Donkin pair in this case.

Next, suppose $G = \Sp(V)$ or $\SO(V)$.  According to~\cite[Proposition~3.10]{ls}, both $V_1$ and $V_2$ can be equipped with nondegenerate bilinear forms $B_1$ and $B_2$ such that $B_1 \otimes B_2$ agrees with the given bilinear form on $V$.  Moreover, $(G^x_\red)^\circ = \Aut(V_1,B_1)^\circ$.  We are thus in the setting of Proposition~\ref{prop:pairlist}\eqref{it:tens}. \qed

\begin{table}
\[
\begin{array}{ll}
\text{Orbit} & (G^x_\red)^\circ \\
\hline
\rA_1 & \rE_7 \ilevi \rE_8 \\
\rA_1^2 & \rB_6 \ilevi \rB_1\rB_6 \iclass \rD_8 \imax \rE_8 \\
\rA_2 & \rE_6 \ilevi \rA_2\rE_6 \imax \rE_8 \\
\rA_1^3 & \rA_1 \rF_4 \iauto \rA_1 \rE_6 \ilevi \rE_8 \\
\rA_2\rA_1 & \rA_5 \ilevi \rA_1\rA_2\rA_5 \imax \rE_8 \\
\rA_3 & \rB_5 \ilevi \rB_2\rB_5 \iclass \rD_8 \imax \rE_8 \\
\rA_1^4 & \rC_4 \itens \rD_8 \imax \rE_8 \\
\rA_2\rA_1^2 & \rA_1\rB_3 = \rB_1\rB_3 \idiag \rB_1\rB_1\rB_1\rB_3 \iclass \rD_8 \imax \rE_8 \\
\rA_2^2 & \rG_2\rG_2 \iauto \rD_4\rD_4 \iclass \rD_8 \imax \rE_8 \\
\rA_3\rA_1 & \rA_1\rB_3 = \rC_1 \rB_3 \itens \rD_2\rB_3 \ilevi \rB_2 \rD_2 \rB_3 \iclass \rD_8 \imax \rE_8 \\
\rA_4 & \rA_4 \ilevi \rE_8 \\
\rD_4 & \rF_4 \iauto \rE_6 \ilevi \rE_8 \\
\rD_4(a_1) & \rD_4 \ilevi \rE_8 \\
\rA_2\rA_1^3 & \rA_1 \rG_2 \ires \rA_1\rA_6 \ilevi \rA_1\rE_7 \imax \rE_8 \\
\rA_2^2\rA_1 & \rA_1 \rG_2 \ilevi \rG_2\rG_2 \iauto \rD_4\rD_4 \iclass \rD_8 \imax \rE_8 \\
\rA_3\rA_1^2 & \rA_1\rB_2 = \rB_1\rC_2 \itens \rB_1\rD_4 \ilevi \rB_1\rD_4 \rB_2 \iclass \rD_8 \imax \rE_8 \\
\rA_3\rA_2 & \rB_2\rT_1 = \rB_2\rD_1 \itens \rB_2\rD_3 \ilevi \rB_2\rD_3\rB_2 \iclass \rD_8 \imax \rE_8 \\
\rA_4\rA_1 & \rA_2\rT_1 \ilevi \rA_4\rA_4 \imax \rE_8 \\
\rD_4\rA_1 & \rC_3 \ilevi \rF_4 \iauto \rE_6 \ilevi \rE_8 \\
\rD_4(a_1)\rA_1 & \rA_1^3 \ilevi \rE_8 \\
\rA_5 & \rA_1\rG_2 \iauto \rA_1\rD_4 \ilevi \rE_8 \\
\rD_5 & \rB_3 \ilevi \rB_3\rB_4 \iclass \rD_8 \imax \rE_8 \\
\rD_5(a_1) & \rA_3 \ilevi \rA_3\rD_5 \imax \rE_8 \\
\rA_2^2\rA_1^2 & \rB_2 \itens \rB_7 \iclass \rD_8 \imax \rE_8 \\
\rA_3\rA_2\rA_1 & \rA_1\rA_1 \idiag \rA_1(\rA_1\rA_1) \ires \rA_1(\rA_4\rA_2) \ilevi \rA_1\rE_7 \imax \rE_8 \\
\rA_3^2 & \rC_2 \itens \rD_8 \imax \rE_8 \\
\rA_4\rA_1^2 & \rA_1\rT_1 \idiag \rA_1\rA_1\rT_1 \ilevi \rA_4 \ilevi \rE_8 \\
\rA_4\rA_2 & \rA_1\rA_1 \idiag \rA_1(\rA_1\rA_1\rA_1) \ires \rA_1(\rA_1\rA_2\rA_3) \ilevi \rA_1\rE_7 \imax \rE_8 \\
\rD_4\rA_2 & \tilde{\rA}_2 \ilevi \rF_4 \iauto \rE_6 \ilevi \rE_8 \\
\end{array}
\]
\caption{Nilpotent centralizers in $\rE_8$}
\label{tab:e8-1}
\end{table}

\begin{table}
\[
\begin{array}{ll}
\text{Orbit} & (G^x_\red)^\circ \\
\hline
\rD_4(a_1)\rA_2 & \rA_2 \ires \rA_7 \ilevi \rE_8 \\
\rA_5\rA_1 & \rA_1\rA_1 \ilevi \rA_1\rG_2 \iauto \rA_1\rD_4 \ilevi \rA_1\rE_7 \imax \rE_8 \\
\rD_5\rA_1 & \rA_1\rA_1 = \rB_1\rC_1 \itens \rB_1\rD_2 \ilevi \rB_4\rB_1\rD_2 \iclass \rD_8 \imax \rE_8 \\
\rD_5(a_1)\rA_1 & \rA_1\rA_1 \ires \rA_1 \tilde{\rA_2} \ilevi \rA_1\rF_4 \iauto \rA_1\rE_6 \ilevi \rA_1\rE_7 \imax \rE_8 \\
\rA_6 & \rA_1^2 \ilevi \rA_1\rE_7 \imax \rE_8 \\
\rD_6 & \rB_2 \ilevi \rB_2\rB_5 \iclass \rD_8 \imax \rE_8 \\
\rD_6(a_1) & \rA_1\rA_1 = \rD_2 \ilevi \rD_2 \rD_6 \iclass \rD_8 \imax \rE_8 \\
\rD_6(a_2) & \rA_1\rA_1 = \rD_2 \ilevi \rD_2 \rD_6 \iclass \rD_8 \imax \rE_8 \\
\rE_6 & \rG_2 \iauto \rD_4 \ilevi \rE_8 \\
\rE_6(a_1) & \rA_2 \ilevi \rA_2\rE_6 \imax \rE_8 \\
\rE_6(a_3) & \rG_2  \iauto \rD_4 \ilevi \rE_8 \\
\rA_4\rA_2\rA_1 & \rA_1 \idiag \rA_1\rA_1\rA_1 \ires \rA_1\rA_2\rA_3 \ilevi \rE_7 \ilevi \rE_8 \\
\rA_4\rA_3 & \rA_1 = \rB_1 \itens \rB_7 \iclass \rD_8 \imax \rE_8 \\
\rD_5\rA_2 & \rT_1 \\
\rD_5(a_1)\rA_2 & \rA_1 = \rB_1 \itens \rB_4 \ilevi \rB_4\rB_3 \iclass \rD_8 \imax \rE_8 \\
\rA_6\rA_1 & \rA_1 \ilevi \rE_8 \\
\rE_6\rA_1 & \rA_1 \ilevi \rG_2 \iauto \rD_4 \ilevi \rE_8 \\
\rE_6(a_1)\rA_1 & \rT_1 \\
\rE_6(a_3)\rA_1 & \rA_1 \ilevi \rG_2 \iauto \rD_4 \ilevi \rE_8 \\
\rA_7 & \rA_1 = \rC_1 \itens \rD_8 \imax \rE_8 \\
\rD_7 & \rA_1 = \rB_1 \ilevi \rB_1 \rB_6 \iclass \rD_8 \imax \rE_8 \\
\rD_7(a_1) & \rT_1 \\
\rD_7(a_2) & \rT_1 \\
\rE_7 & \rA_1 \ilevi \rA_1\rE_7 \imax \rE_8 \\
\rE_7(a_1) & \rA_1 \ilevi \rA_1\rE_7 \imax \rE_8 \\
\rE_7(a_2) & \rA_1 \ilevi \rA_1\rE_7 \imax \rE_8 \\
\rE_7(a_3) & \rA_1 \ilevi \rA_1\rE_7 \imax \rE_8 \\
\rE_7(a_4) & \rA_1 \ilevi \rA_1\rE_7 \imax \rE_8 \\
\rE_7(a_5) & \rA_1 \ilevi \rA_1\rE_7 \imax \rE_8
\end{array}
\]
\caption{Nilpotent centralizers in $\rE_8$, continued}
\label{tab:e8-2}
\end{table}

\begin{table}
\[
\begin{array}{ll}
\text{Orbit} & (G^x_\red)^\circ \\
\hline
\rA_1 & \rD_6 \ilevi \rE_7 \\
\rA_1^2 & \rA_1 \rB_4 = \rB_1\rB_4 \iclass \rD_6 \ilevi \rE_7 \\
\rA_2 & \rA_5 \ilevi \rE_7 \\
\rA_2\rA_1 & \rA_3\rT_1 \ilevi \rE_7 \\
(\rA_1^3)' & \rF_4 \iauto \rE_6 \ilevi \rE_7 \\
(\rA_1^3)'' & \rA_1\rC_3  \iauto \rA_1\rA_5 \ilevi \rE_7 \\
\rA_3 & \rA_1\rB_3 = \rB_1\rB_3 \iclass \rD_5 \ilevi \rE_7 \\
\rA_1^4 & \rC_3 \itens \rD_6 \ilevi \rE_7 \\
\rA_2\rA_1^2 & \rA_1\rA_1\rA_1 = \rB_1\rA_1\rB_1 \itens \rB_4\rA_1\rB_1 \iclass \rA_1\rD_6 \imax \rE_7 \\
\rA_2^2 & \rA_1\rG_2 \iauto \rA_1\rD_4 \idiag \rA_1\rA_1\rA_1\rD_4 = \rA_1\rD_2\rD_4 \iclass \rA_1\rD_6 \imax \rE_7 \\
\rA_4 & \rA_2\rT_1 \ilevi \rE_7 \\
\rD_4 & \rC_3 \iauto \rA_5 \ilevi \rE_7 \\
\rD_4(a_1) & \rA_1\rA_1\rA_1 \ilevi \rE_7 \\
\rA_2\rA_1^3 & \rG_2 \ires \rA_6 \ilevi \rE_7 \\
\rA_2^2\rA_1 & \rA_1\rA_1 \idiag (\rA_1\rA_1\rA_1)\rG_2 \iauto \rA_1\rD_2\rD_4 \iclass \rA_1\rD_6 \imax \rE_7 \\
(\rA_3\rA_1)' & \rB_3 \iclass \rD_4 \ilevi \rE_7 \\
(\rA_3\rA_1)'' & \rA_1\rA_1\rA_1 = \rC_1\rA_1\rB_1 \itens \rD_2 \rA_1 \rB_1 \iclass \rD_2\rA_1\rD_2 \iclass \rA_1\rD_4 \ilevi \rE_7 \\
\rA_3\rA_1^2 & \rA_1\rA_1 \idiag \rA_1(\rA_1\rA_1) \ilevi \rA_1\rA_3 \ilevi \rE_7 \\
\rA_3\rA_2 & \rA_1\rT_1 = \rB_1\rD_1 \itens \rB_1\rD_3 \ilevi \rB_1\rD_3\rB_1 \iclass \rD_6 \ilevi \rE_7 \\
\rA_4\rA_1 & \rT_2 \\
\rD_4\rA_1 & \rC_2 \iauto \rA_3 \ilevi \rE_7 \\
\rD_4(a_1)\rA_1 & \rA_1\rA_1 \ilevi \rE_7 \\
(\rA_5)' & \rG_2 \iauto \rD_4 \ilevi \rE_7 \\
(\rA_5)'' & \rA_1\rA_1 \idiag \rA_1(\rA_1\rA_1\rA_1) \ilevi \rE_7 \\
\rD_5 & \rA_1\rA_1 \idiag \rA_1(\rA_1\rA_1) \ilevi \rE_7 \\
\rD_5(a_1) & \rA_1\rT_1 \ilevi \rE_7 \\
\rA_3\rA_2\rA_1 & \rA_1 \idiag \rA_1\rA_1 \ires \rA_4\rA_2 \ilevi \rE_7 \\
\rA_4\rA_2 & \rA_1 \idiag \rA_1\rA_1\rA_2 \ires \rA_1 \rA_2 \rA_3 \ilevi \rE_7 \\
\rA_5\rA_1 & \rA_1 \ilevi \rG_2 \iauto \rD_4 \ilevi \rE_7 \\
\end{array}
\]
\caption{Nilpotent centralizers in $\rE_7$}
\label{tab:e7-1}
\end{table}

\begin{table}
\[
\begin{array}{ll}
\text{Orbit} & (G^x_\red)^\circ \\
\hline
\rD_5\rA_1 & \rA_1 \idiag \rA_1\rA_1 \ilevi \rE_7 \\
\rD_5(a_1)\rA_1 & \rA_1 \ires \tilde{\rA}_2 \ilevi \rF_4 \iauto \rE_6 \ilevi \rE_7 \\
\rA_6 & \rA_1 \ilevi \rE_7 \\
\rD_6 & \rA_1 \idiag \rA_1\rA_1 \ilevi \rE_7 \\
\rD_6(a_1) & \rA_1 \ilevi \rE_7 \\
\rD_6(a_2) & \rA_1 \ilevi \rE_7 \\
\rE_6 & \rA_1 \idiag \rA_1\rA_1\rA_1 \ilevi \rE_7 \\
\rE_6(a_1) & \rT_1 \\
\rE_6(a_3) & \rA_1 \idiag \rA_1\rA_1\rA_1 \ilevi \rE_7
\end{array}
\]
\caption{Nilpotent centralizers in $\rE_7$, continued}
\label{tab:e7-2}
\end{table}

\begin{table}
\[
\begin{array}{ll}
\text{Orbit} & (G^x_\red)^\circ \\
\hline
\rA_1 & \rA_5 \ilevi \rE_6 \\
\rA_1^2 & \rB_3\rT_1 \iclass \rD_4\rT_1 \ilevi \rE_6 \\
\rA_2 & \rA_2\rA_2 \ilevi \rE_6 \\
\rA_1^3 & \rA_2\rA_1 \idiag (\rA_2\rA_2)\rA_1 \ilevi \rE_6 \\
\rA_2\rA_1 & \rA_2\rT_1 \ilevi \rE_6 \\
\rA_3 & \rB_2\rT_1 \iclass \rD_3 \rT_1 \ilevi \rE_6 \\
\rA_2\rA_1^2 & \rA_1\rT_1 = \rB_1\rT_1 \itens \rB_4\rT_1 \iclass \rD_5\rT_1 \ilevi \rE_6 \\
\rA_2^2 & \rG_2 \iauto \rD_4 \ilevi \rE_6 \\
\rA_3\rA_1 & \rA_1\rT_1 = \rC_1\rT_1 \itens \rD_2\rT_1 \ilevi \rE_6  \\
\rA_4 & \rA_1\rT_1 \ilevi \rE_6 \\
\rD_4 & \rA_2 \idiag \rA_2\rA_2 \ilevi \rE_6 \\
\rD_4(a_1) & \rT_2 \\
\rA_2^2\rA_1 & \rA_1 \idiag \rA_1\rA_1\rA_1 \ilevi \rE_6 \\
\rA_4\rA_1 & \rT_1 \\
\rA_5 & \rA_1 \ilevi \rE_6 \\
\rD_5 & \rT_1 \\
\rD_5(a_1) & \rT_1
\end{array}
\]
\caption{Nilpotent centralizers in $\rE_6$}
\label{tab:e6}
\end{table}

\begin{table}
\[
\begin{array}{ll}
\text{Orbit} & (G^x_\red)^\circ \\
\hline
\rA_1 & \rC_3 \ilevi \rF_4 \\
\tilde{\rA}_1 & \rA_3 \ilevi \rA_3\tilde{\rA}_1 \imax \rF_4 \\
\rA_1 \tilde{\rA}_1 & \rA_1\tilde{\rA}_1 \ires \rA_1\tilde{\rA}_2 \ilevi \rF_4 \\
\rA_2 & \tilde{\rA}_2 \ilevi \rF_4 \\
\tilde{\rA}_2 & \rG_2 \iauto \rD_4 \iclass \rB_4 \imax \rF_4 \\
\rB_2 & \rA_1\rA_1 \ilevi \rB_4 \imax \rF_4 \\
\rA_2 \tilde{\rA}_1 & \tilde{\rA}_1 \ilevi \rF_4 \\
\tilde{\rA}_2 \rA_1 & \rA_1 \ilevi \rG_2 \iauto \rD_4 \iclass \rB_4 \imax \rF_4  \\
\rB_3 & \rA_1 \ires \tilde{\rA}_2 \ilevi \rF_4 \\
\rC_3 & \rA_1 \ilevi \rF_4 \\
\rC_3(a_1) & \rA_1 \ilevi \rF_4
\end{array}
\]
\caption{Nilpotent centralizers in $\rF_4$}
\label{tab:f4}
\end{table}

\begin{table}
\[
\begin{array}{ll}
\text{Orbit} & (G^x_\red)^\circ \\
\hline
\rA_1 & \tilde{\rA}_1 \ilevi \rA_1\tilde{\rA}_1 \imax \rG_2 \\
\tilde{\rA}_1 & \rA_1 \ilevi \rA_1\tilde{\rA}_1 \imax \rG_2
\end{array}
\]
\caption{Nilpotent centralizers in $\rG_2$}
\label{tab:g2}
\end{table}

\subsection{Proof for \texorpdfstring{$\rE_8$}{E8}}\label{ss:E8}

When $x$ is distinguished, $(G^x_\red)^\circ$ is the trivial group; and when $x = 0$, $(G^x_\red)^\circ = G$.  For all remaining nilpotent orbits, we rely on the very detailed case-by-case descriptions of $(G^x_\red)^\circ$ given in~\cite[Chapter~15]{ls}.  In each case, that description shows that the embedding $(G^x_\red)^\circ \hookrightarrow G$ factors as a composition of various cases from Proposition~\ref{prop:pairlist}.

These factorizations are shown in Tables~\ref{tab:e8-1} and~\ref{tab:e8-2}. Here is a brief explanation of the notation used in these tables.  Nearly all groups mentioned are semisimple, and they are recorded in the tables by their root systems.  However, the notation ``$\rT_1$'' indicates a $1$-dimensional torus; this is used to indicate a reductive group with a $1$-dimensional center.  In a few cases, nonstandard names for root systems---such as $\rB_1$ or $\rC_1$, in place of $\rA_1$---are used when it is convenient to emphasize the role of a certain classical group.  The notation $\rD_1$ (meant to evoke $\SO_2$) is occasionally used as a synonym for $\rT_1$.

Finally, we remark that there are two orbits---labeled by $\rA_6$ and by $\rA_6\rA_1$---where the information given in~\cite{ls} is insufficient to finish the argument.  In each of these cases, $(G^x_\red)^\circ$ contains a copy of $\rA_1$ that is the centralizer of a certain copy of $\rG_2$ inside $\rE_7$, and~\cite{ls} does not give further details on this embedding $\rA_1 \hookrightarrow \rE_7$.  However, according to~\cite[\S 3.12]{seitz}, this $\rA_1$ is in fact (the derived subgroup of) a Levi subgroup of $\rE_7$. \qed

\subsection{Proof for \texorpdfstring{$\rE_7$}{E7} and \texorpdfstring{$\rE_6$}{E6}}

Recall that if $H \subseteq G$ is a closed subgroup, then there is a natural embedding $\cN_H \hookrightarrow \cN_G$
of nilpotent cones. We also recall that a subgroup $L \subseteq G$ is a Levi subgroup if and only if it 
is the centralizer of a torus $S \subset G$. 

\begin{lem}\label{lem:levi-cent}
Let $G$ be a reductive group, $S \subseteq G$ a torus with $L = C_G(S)$ a Levi subgroup, and suppose
 $x \in \cN_L \subseteq \cN_G$ is such that $S \subseteq (G^x_\red)^\circ$. 
Then $(L^x_\red)^\circ$ is a Levi subgroup of $(G^x_\red)^\circ$. 
\end{lem}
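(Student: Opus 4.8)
The plan is to show that $S$ itself is the torus whose centralizer in $(G^x_\red)^\circ$ is $(L^x_\red)^\circ$; by the characterization recalled just before the statement (a subgroup is a Levi subgroup precisely when it is the centralizer of a torus), this identifies $(L^x_\red)^\circ$ as a Levi subgroup of $(G^x_\red)^\circ$ at once. Concretely, the goal is the identity
\[
(L^x_\red)^\circ = C_{(G^x_\red)^\circ}(S).
\]

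First I would fix an associated cocharacter $\tau\colon \Gm \to G$ for $x$ used to define $G^x_\red = C_{G^x}(\tau(\Gm))$. The hypothesis $S \subseteq (G^x_\red)^\circ$ forces $S$ to commute with $\tau(\Gm)$, so that $\tau(\Gm) \subseteq C_G(S) = L$. Next I would record that $L^x = L \cap G^x = C_{G^x}(S)$. Granting that $\tau$ is an associated cocharacter for $x$ \emph{in $L$} as well, so that $L^x_\red = C_{L^x}(\tau(\Gm))$, I obtain
\[
L^x_\red = C_{L^x}(\tau(\Gm)) = C_{G^x}(S) \cap C_{G^x}(\tau(\Gm)) = C_{G^x_\red}(S).
\]
Passing to identity components and using that the centralizer of the torus $S$ in the connected reductive group $(G^x_\red)^\circ$ is connected (which is where $S \subseteq (G^x_\red)^\circ$ enters), this gives $(L^x_\red)^\circ = \left(C_{G^x_\red}(S)\right)^\circ = C_{(G^x_\red)^\circ}(S)$, completing the argument.

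The main obstacle is the step asserting that $\tau$, which is associated to $x$ in $G$ and (by the previous paragraph) factors through $L$, is also associated to $x$ in the Levi subgroup $L$; only then is the formula $L^x_\red = C_{L^x}(\tau(\Gm))$ legitimate. The forward compatibility (an $L$-associated cocharacter is $G$-associated) is standard, but here I need the converse. In good characteristic I would establish it through the $\mathfrak{sl}_2$-triple description of associated cocharacters: with $h = d\tau(1)$, there is $y \in \mathfrak{g}(\tau;-2)$ making $\{x,h,y\}$ an $\mathfrak{sl}_2$-triple, where $\mathfrak{g}=\Lie(G)$. Since $S$ commutes with both $x$ and $\tau(\Gm)$, it acts on the $\tau$-grading of $\mathfrak{g}$ and fixes $h$; because $h$ lies in the image of the $S$-equivariant map $\mathrm{ad}\,x\colon \mathfrak{g}(\tau;-2) \to \mathfrak{g}(\tau;0)$ and $S$ is linearly reductive, $h$ is already the image of some $y \in \mathfrak{g}(\tau;-2)^S = \mathfrak{l}(\tau;-2)$, where $\mathfrak{l}=\Lie(L)=\mathfrak{g}^S$. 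The resulting triple lies in $\mathfrak{l}$, witnessing that $\tau$ is associated to $x$ in $L$. Alternatively, one may simply invoke the compatibility of associated cocharacters with Levi subgroups from \cite[\S5]{jan:norp}, which packages exactly this converse.
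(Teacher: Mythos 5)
Your proof is aimed at exactly the identity the paper itself establishes, $(L^x_\red)^\circ = C_{(G^x_\red)^\circ}(S)$, and the outer layers are correct: $\tau(\Gm)\subseteq L$, $L^x = C_{G^x}(S)$, the passage to identity components via connectedness of torus centralizers (you correctly flag that this is where $S \subseteq (G^x_\red)^\circ$ enters), and the final identification of a torus centralizer in a connected reductive group as a Levi subgroup. But your route is genuinely different from the paper's, and the difference is where the problem sits. The paper never needs to know that $\tau$ is $L$-associated: it works with the group-level decomposition $(G^x)^\circ = (G^x_\red)^\circ \ltimes G^x_\unip$, observes that conjugation by $S$ preserves both factors (because $S$ lies in the reductive factor), so that $C_{(G^x)^\circ}(S) = C_{(G^x_\red)^\circ}(S) \ltimes C_{G^x_\unip}(S)$, and reads off the reductive part of $(L^x)^\circ$ from this Levi decomposition. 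Your argument instead funnels everything through the claim that the $G$-associated cocharacter $\tau$, having image in $L$, is also $L$-associated. That claim is true, and proving it would yield a clean (in some ways sharper) argument; the gap is that your justification of it does not work in good characteristic.

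The ``$\mathfrak{sl}_2$-triple description of associated cocharacters'' is a characteristic-$0$ (or very large $p$) tool; avoiding it is precisely the point of the Premet--Jantzen theory the paper relies on. Two things go wrong. First, the existence of $y \in \mathfrak{g}(\tau;-2)$ with $[x,y]=h$ is not a standard fact in merely good characteristic (Jacobson--Morozov fails or is delicate for small good $p$; this is the territory of McNinch's optimal $\SL_2$-homomorphisms, with hypotheses and arguments you have not supplied). Second, and fatally, even granting such a triple inside $\mathfrak{l}$, it cannot ``witness'' $L$-associatedness, because in characteristic $p$ associatedness is not detectable from $d\tau$: for $G = \GL_2$, $x = E_{12}$, $h = \mathrm{diag}(1,-1)$, both $\tau(t) = \mathrm{diag}(t,t^{-1})$ and $\tau'(t) = \mathrm{diag}(t^{p+1},t^{p-1})$ satisfy $\mathrm{Ad}(\tau(t))x = t^2x$, have differential $h$, and admit the same triple $(x,h,f)$, yet $\tau'$ is not associated to $x$ (its image does not lie in $\SL_2$, the derived group of the unique witness Levi). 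Associatedness includes the integral condition that $\tau$ land in the derived group of a Levi in which $x$ is distinguished, and no Lie-algebra computation with the triple sees this. Your fallback citation does not close the hole either: so far as I can tell, \S 5 of the cited notes contains the easy direction (an $L$-associated cocharacter is $G$-associated, immediate since a Levi of a Levi is a Levi), not the converse you need.

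The converse is provable, but it takes an actual argument, e.g.: choose a maximal torus $S'$ of $(G^x_\red)^\circ$ containing $S$, and set $M = C_G(S') \subseteq C_G(S) = L$. If $M_0$ is a witness Levi for $\tau$ in $G$, then $Z(M_0)^\circ$ is a maximal torus of $C_G(x)$ lying in $(G^x_\red)^\circ$ (using $M_0 = C_G(Z(M_0)^\circ)$ and that $x$ is distinguished in $\Lie(M_0)$), so $S' = g Z(M_0)^\circ g^{-1}$ for some $g \in (G^x_\red)^\circ$; since $g$ fixes $x$ and centralizes $\tau(\Gm)$, the conjugate $M = gM_0g^{-1}$ is a Levi subgroup of $L$ in which $x$ is distinguished and whose derived group contains $\tau(\Gm)$, so $\tau$ is $L$-associated. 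With this lemma supplied (or a precise reference to it), your proof closes up; as written, the central step rests on machinery that is unavailable in the stated generality.
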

\begin{proof}
We clearly have $C_{G^x}(S) = G^x \cap L = L^x$.  Moreover, the identity component $(L^x)^\circ$ must be contained in $(G^x)^\circ \cap L = C_{(G^x)^\circ}(S)$.  But since the centralizer of a torus in a connected group is connected, we actually have
\[
(L^x)^\circ = C_{(G^x)^\circ}(S).
\]
Next, consider the semidirect product decomposition $(G^x)^\circ = (G^x_\red)^\circ \ltimes G^x_\unip$.  Let $g \in G^x$, and write it as $g = (g_r, g_u)$, with $g_r \in (G^x_\red)^\circ$, $g_u \in G^x_\unip$.  If $g$ centralizes $S$, then $g_r$ and $g_u$ must individually centralize $S$ as well.  In other words,
\begin{equation}\label{eqn:semidirect}
C_{(G^x)^\circ}(S) = C_{(G^x_\red)^\circ}(S) \ltimes C_{G^x_\unip}(S).
\end{equation}
Here, $C_{(G^x_\red)^\circ}(S)$ is a connected reductive group, and $C_{G^x_\unip}(S)$ is a normal unipotent group (which must be connected, because $C_{(G^x)^\circ}(S)$ is connected).  We conclude that~\eqref{eqn:semidirect} is a Levi decomposition of $(L^x)^\circ$.  In particular, we see that $(L^x_\red)^\circ = C_{(G^x_\red)^\circ}(S)$.
\end{proof}

We now let $G$ denote the simple, simply connected group of type $\rE_8$. If $G_0$ 
is the simple, simply connected group of type $\rE_7$ or $\rE_6$, then as explained in 
\cite[Lemma~11.14]{ls}, there is a simple subgroup $H$ of type $\rA_1$ or $\rA_2$ respectively,
such that $G_0 = C_{G}(H)$. 
Moreover, by \cite[16.1.2]{ls}, there exists a torus $S \subset H \subset G$ such that $G_0$ is the 
derived subgroup of the Levi subgroup $L = C_G(S)$.  Explicitly, let $\alpha_1, \ldots, \alpha_8$ be the simple roots for $G$, labelled as in~\cite{bourbaki}, and let $\alpha_0$ be the highest root.  The groups $H$ and $G_0$ can be described as follows.
\begin{equation}\label{eqn:e8-embed}
\begin{array}{c|c|c}
& \text{simple roots for $H$} & \text{simple roots for $L$ or $G_0$} \\
\hline
G_0 = \rE_7 & -\alpha_0 & \alpha_1, \ldots, \alpha_7 \\
G_0 = \rE_6 & \alpha_8, -\alpha_0 & \alpha_1, \ldots, \alpha_6
\end{array}
\end{equation}

Now it is explained in \cite[16.1.1]{ls}, that if $x \in \cN_{G_0} = \cN_L \subset \cN_G$, then the subgroup
$(G^x_{\red})^\circ$ must contain a conjugate of $H$. Without loss of generality we can assume that 
$x$ is chosen so that $H \subseteq (G^x_{\red})^\circ$. Hence, we can also assume that 
$S \subseteq (G^x_{\red})^\circ$. Thus, by Lemma~\ref{lem:levi-cent}, $(L^x_\red)^\circ$ 
is a Levi subgroup of $(G^x_\red)^\circ$ and we also have
\[
 (L^x_\red)^{\circ}\hspace{0mm}' \subseteq ((G_0^x)_\red)^\circ \subseteq (L^x_\red)^\circ.
\]
By Lemma~\ref{lem:derived}, Proposition~\ref{prop:pairlist}\eqref{it:levi} and \S\ref{ss:E8} we deduce that 
$(G,(G_0^x)_\red))^\circ)$ is a Donkin pair. 

Finally, to show that $(G_0, ((G_0^x)_\red)^\circ)$ is a Donkin pair, it will be sufficient to show that every
fundamental tilting module for $G_0$ is a summand of the restriction of a tilting module for $G$.  In more detail, let $\pi: \bX_G \twoheadrightarrow \bX_{G_0}$ be the map on weight lattices.   It is will known that if $\lambda$ is a dominant weight for $G$, then the $G_0$-tilting module $T_{G_0}(\pi(\lambda))$ occurs as a direct summand of $\Res^G_{G_0}(T_G(\lambda))$.  So it is enough to show that every fundamental weight for $G_0$ occurs as $\pi(\lambda)$ for some dominant $G$-weight $\lambda$.  Let $\varpi_1, \ldots, \varpi_8$ be the fundamental weights for $G$.  A short calculation with~\eqref{eqn:e8-embed} shows that $\pi(\varpi_1), \ldots, \pi(\varpi_7)$ are precisely the fundamental weights for $\rE_7$, and that $\pi(\varpi_1), \ldots, \pi(\varpi_6)$ are the fundamental weights for $\rE_6$. \qed

\begin{rmk}\label{rmk:e67-old}
One can also prove the theorem for $\rE_7$ and $\rE_6$ directly by writing down the embedding of each centralizer, as we did for $\rE_8$.  Here is a brief summary of how to carry out this approach.  Let $G$ be of type $\rE_8$, and let $G_0$ and $H$ be as in the discussion above.  As explained in~\cite[\S16.1]{ls}, we have
\[
((G_0^x)_\red)^\circ = C_{(G^x_\red)^\circ}(H).
\]
The computation of $C_{(G^x_\red)^\circ}(H)$ is explained in~\cite[\S16.1.4]{ls}, and the results are recorded in Tables~\ref{tab:e7-1}--\ref{tab:e6}, following the same notational conventions as in the $\rE_8$ case.
\end{rmk}

\subsection{Proof for \texorpdfstring{$\rF_4$}{F4}}

Let $G$ be the simple, simply connected group of type $\rE_8$.  Then~\cite[Lemma~11.7]{ls} implies that $G$ contains a simple subgroup $H$ of type $\rG_2$, and that its centralizer $G_0 = C_G(H)$ is a simple group of type $\rF_4$.  The embeddings of centralizers of nilpotent elements for $G_0 = \rF_4$ can then be computed using the method explained in Remark~\ref{rmk:e67-old}.  One caveat is that the name (i.e., the Bala--Carter label) of a nilpotent orbit usually changes when passing from $\rF_4$ to $\rE_8$.  The correspondence between these names can is given in~\cite[Proposition~16.10]{ls}.

We remark that in some cases, the book~\cite{ls} does not quite give enough details about embeddings of subgroups to establish our result, but in these cases, the relevant details can be found in~\cite[\S3.16]{seitz}.  Here is an example illustrating this.  The $\rF_4$-orbit labelled $\tilde\rA_1$ corresponds (by~\cite[Proposition~16.10]{ls}) to the $\rE_8$-orbit labelled $\rA_1^2$.  Let $x$ be an element of this orbit.  We have see that in $\rE_8$, $(G^x_\red)^\circ = \rB_6$, which embeds in the Levi subgroup $\rD_7 \subset \rE_8$.  The group $(G^x_\red)^\circ$ has a subgroup of type $\rD_3 \rB_3$, which embeds in $\rD_3 \rD_4 \subset \rD_7 \subset \rD_8$.  The explicit construction of $H = \rG_2$ in~\cite[\S3.16]{seitz} shows that it is contained in the second factor in each of $\rD_3 \rB_3 \subset \rD_3 \rD_4$.  It follows that $\rD_3 = \rA_3$ is contained in $((G^x_0)_\red)^\circ = C_{(G^x_\red)^\circ}(H)$, and then a dimension calculation shows that in fact $((G^x_0)_\red)^\circ = \rA_3$.

The results of these calculations are recorded in Table~\ref{tab:f4}.\qed

\subsection{Proof for \texorpdfstring{$\rG_2$}{G2}}

In this case, there are only two nilpotent orbits that are neither distinguished nor trivial.  From the classification, both of these orbits meet the maximal reductive subgroup $\rA_1\tilde\rA_1 \subset \rG_2$, and an argument explained in~\cite[\S16.1.4]{ls} shows that if $x$ belongs to either of these orbits, then the reductive part of its centralizer in $\rA_1\tilde\rA_1$ is equal to the reductive part of its centralizer in $\rG_2$.  See Table~\ref{tab:g2}. \qed


\end{document}